\documentclass[11pt,letterpaper,reqno]{amsart}

\usepackage{amsmath,amssymb,amsthm,amsfonts,mathtools}
\usepackage{mathrsfs}
\usepackage{xcolor}
\usepackage[T1]{fontenc}
\usepackage{microtype}
\usepackage{aliascnt}

\usepackage[colorlinks=true,
  linkcolor=blue,
  citecolor=blue,
  urlcolor=blue]{hyperref}
\usepackage[nameinlink,capitalize,noabbrev]{cleveref}

\allowdisplaybreaks
\numberwithin{equation}{section}

\newtheorem{theorem}{Theorem}[section]

\newaliascnt{problem}{theorem}
\newtheorem{problem}[problem]{Problem}
\aliascntresetthe{problem}

\newaliascnt{proposition}{theorem}
\newtheorem{proposition}[proposition]{Proposition}
\aliascntresetthe{proposition}

\newaliascnt{lemma}{theorem}
\newtheorem{lemma}[lemma]{Lemma}
\aliascntresetthe{lemma}

\newaliascnt{corollary}{theorem}
\newtheorem{corollary}[corollary]{Corollary}
\aliascntresetthe{corollary}

\newaliascnt{claim}{theorem}

\aliascntresetthe{claim}

\newaliascnt{question}{theorem}

\aliascntresetthe{question}

\newaliascnt{conjecture}{theorem}

\aliascntresetthe{conjecture}

\theoremstyle{definition}

\newaliascnt{definition}{theorem}

\aliascntresetthe{definition}

\newaliascnt{example}{theorem}

\aliascntresetthe{example}

\theoremstyle{remark}

\newaliascnt{remark}{theorem}
\newtheorem{remark}[remark]{Remark}
\aliascntresetthe{remark}

\theoremstyle{plain}

\crefname{theorem}{Theorem}{Theorems}
\Crefname{theorem}{Theorem}{Theorems}
\crefname{problem}{Problem}{Problems}
\Crefname{problem}{Problem}{Problems}
\crefname{proposition}{Proposition}{Propositions}
\Crefname{proposition}{Proposition}{Propositions}
\crefname{lemma}{Lemma}{Lemmas}
\Crefname{lemma}{Lemma}{Lemmas}
\crefname{corollary}{Corollary}{Corollaries}
\Crefname{corollary}{Corollary}{Corollaries}
\crefname{claim}{Claim}{Claims}
\Crefname{claim}{Claim}{Claims}
\crefname{question}{Question}{Questions}
\Crefname{question}{Question}{Questions}
\crefname{conjecture}{Conjecture}{Conjectures}
\Crefname{conjecture}{Conjecture}{Conjectures}
\crefname{definition}{Definition}{Definitions}
\Crefname{definition}{Definition}{Definitions}
\crefname{example}{Example}{Examples}
\Crefname{example}{Example}{Examples}
\crefname{remark}{Remark}{Remarks}
\Crefname{remark}{Remark}{Remarks}

\DeclareMathOperator{\rank}{rank}

\DeclareMathOperator{\sgn}{sgn}
\newcommand{\mr}{\operatorname{mr}}
\newcommand{\mrp}{\operatorname{mr}_{+}}
\newcommand{\fod}{\operatorname{fod}}
\newcommand{\KG}{\operatorname{KG}}
\newcommand{\F}{\mathbb F}
\newcommand{\R}{\mathbb R}
\newcommand{\cS}{\mathcal S}
\newcommand{\cSp}{\mathcal S_{+}}
\newcommand{\eps}{\varepsilon}

\begin{document}

\title{Minimum-rank parameters of complements of threshold Kneser graphs}

\author[T.~Hu]{Tao Hu}
\address{School of Mathematics and Statistics, Xi'an Jiaotong University, Xi'an 710049, P. R. China}
\email{hu\_tao@stu.xjtu.edu.cn}

\author[Q.~Tang]{Quanyu Tang}
\address{School of Mathematical Sciences, University of Science and Technology of China, Hefei 230026, P. R. China}
\email{tangquanyu827@gmail.com}

\subjclass[2020]{Primary 05C50; Secondary 15A03, 15A63}

\keywords{minimum rank, positive semidefinite minimum rank, generalized Johnson graph, faithful orthogonal representation}

\begin{abstract}
Let $J_{\ge s}(n,k)$ be the graph whose vertices are the $k$-subsets of
$[n]$, with two distinct vertices adjacent whenever their intersection has
size at least $s$. Equivalently, $J_{\ge s}(n,k)$ is the complement of a
threshold Kneser graph. We determine both the symmetric minimum rank over an
arbitrary infinite field and the real positive semidefinite minimum rank of
this family. Specifically, for $k\ge2$, $1\le s\le k-1$, and
$n\ge2k-s$, we prove
$$
\mr^{\mathbb F}\left(J_{\ge s}(n,k)\right)
=
\binom{n-2(k-s)}{s}
$$
for every infinite field $\mathbb F$, and
\[
\mrp^{\mathbb R}\left(J_{\ge s}(n,k)\right)
=
\binom{n-2(k-s)}{s}.
\]
The lower bound follows from a diagonal submatrix indexed by two carefully
chosen families of $k$-subsets. For the upper bound, we construct a symmetric
matrix using an exterior power of a bilinear form, a Lagrange interpolation
identity, and a generic nonvanishing argument. Over $\mathbb R$, an interlacing
choice of parameters makes the bilinear form positive definite and yields a positive semidefinite matrix attaining the required upper bound. As consequences, we answer a
question from an American Institute of Mathematics workshop, determine the
real faithful orthogonality dimension of all graphs $J_{\ge s}(n,k)$ in the
stated range, and recover the known minimum-rank formula for Johnson graphs.
\end{abstract}

\maketitle

\section{Introduction}\label{sec:intro}

Minimum-rank problems ask for the least possible rank of a matrix whose off-diagonal zero--nonzero pattern is prescribed by a graph.  Let $G$ be a
finite simple graph with vertex set $V(G)$, and let $\F$ be a field.  We write
$\cS^\F(G)$ for the set of symmetric matrices
$A=(a_{uv})_{u,v\in V(G)}$ over $\F$ such that, for distinct vertices $u,v$,
\[
        a_{uv}\ne0
        \quad\Longleftrightarrow\quad
        \{u,v\}\in E(G).
\]
The diagonal entries are unrestricted.  The \emph{minimum rank} of $G$ over $\F$ is
\[
        \mr^\F(G):=\min\{\rank A:A\in\cS^\F(G)\}.
\]
Over $\R$, we also consider the \emph{positive semidefinite minimum rank}
\[
        \mrp^\R(G):=\min\{\rank A:A\in\cSp^\R(G)\},
        \qquad
        \cSp^\R(G):=\{A\in\cS^\R(G):A\succeq0\},
\]
where $A\succeq0$ means that $x^\top A x\ge0$ for every
$x\in\R^{V(G)}$. Thus $\mr^\R(G)\le \mrp^\R(G)$.  The first parameter is the standard
symmetric minimum rank of \(G\), and the second is its positive semidefinite
minimum rank; see, for example,
\cite{FallatHogben2007,Hogben2010}. Our
convention differs from Haemers' min-rank convention, in which matrices need
not be symmetric, diagonal entries must be nonzero, and entries corresponding
to edges may be zero \cite{Haemers1981}.

For positive integers $n$ and $k$, write $[n]=\{1,2,\ldots,n\}$ and $\binom{[n]}{k}=\{S\subseteq[n]: |S|=k\}$. For $1\le s\le k-1$, define $J_{\ge s}(n,k)$ to be the graph with vertex set $\binom{[n]}{k}$ in which two distinct vertices $S,T$ are adjacent if and only if $|S\cap T|\ge s$. Following Golovnev and Haviv \cite[Definition~3.1]{GolovnevHaviv2022}, let
$K^{<}(n,k,s)$ denote the threshold Kneser graph on
$\binom{[n]}{k}$ in which two distinct vertices are adjacent if and only if
their intersection has size less than $s$. Thus
\[
        J_{\ge s}(n,k)=\overline{K^{<}(n,k,s)}.
\]

More generally, for \(I\subseteq\{0,1,\ldots,k-1\}\), let \(J_I(n,k)\)
denote the generalized Johnson graph on \(\binom{[n]}{k}\) in which distinct
vertices \(S,T\) are adjacent if and only if \(|S\cap T|\in I\).  Thus
\[
        J_{\ge s}(n,k)=J_{\{s,s+1,\ldots,k-1\}}(n,k).
\]
At the two endpoints,
\[
        J_{\ge1}(n,k)=\overline{\KG(n,k)}
        \qquad\text{and}\qquad
        J_{\ge k-1}(n,k)=J(n,k),
\]
where \(\KG(n,k)\) is the Kneser graph and \(J(n,k)\) is the ordinary
Johnson graph.

An AIM workshop report on spectra of matrix families described by graphs considers
the intersection graph of the \(k\)-subsets of an \(n\)-element set, namely
\(J_{\ge1}(n,k)\).  It records the bounds
\begin{equation*}
        n-2k+2
        \le
        \mr^\R\!\left(J_{\ge1}(n,k)\right)
        \le
        n-1
        \qquad
        (2\le k\le n/2),
\end{equation*}
observes that the lower bound is attained when \(k=2\) and when \(k=n/2\),
and asks whether it is always sharp
\cite[Question~Q5]{AIMQuestions}.  In our notation, the question is the
following.

\begin{problem}[\cite{AIMQuestions}]\label{prob:aim}
Let \(n,k\) be integers with \(2\le k\le n/2\).  Is it true that
\[
        \mr^\R\!\left(J_{\ge1}(n,k)\right)=n-2k+2?
\]
\end{problem}

The present paper resolves Problem~\ref{prob:aim} and, more generally, determines
the minimum rank of the entire family \(J_{\ge s}(n,k)\) over
every infinite field, together with its positive semidefinite minimum rank
over \(\R\).

Our main theorem is the following.

\begin{theorem}\label{thm:main}
Let \(n,k,s\) be integers with \(k\ge2\), \(1\le s\le k-1\), and
\(n\ge2k-s\). Then, for every infinite field \(\F\),
\[
\mr^\F\!\left(J_{\ge s}(n,k)\right)
=
\binom{n-2(k-s)}{s}.
\]
Moreover,
\[
\mrp^\R\!\left(J_{\ge s}(n,k)\right)
=
\binom{n-2(k-s)}{s}.
\]
\end{theorem}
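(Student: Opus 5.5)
The plan is to prove the lower bound $\mr^\F \ge \binom{n-2(k-s)}{s}$ and the upper bound $\mrp^\R \le \binom{n-2(k-s)}{s}$, together with $\mr^\F\le\binom{n-2(k-s)}{s}$ for every infinite $\F$. Since $\mr^\R\le\mrp^\R$, this settles both equalities.

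\textbf{Lower bound.} Set $m=n-2(k-s)\ge s$ and split $[n]=M\cup P\cup Q$ with $|M|=m$ and $|P|=|Q|=k-s$. For each $s$-subset $X\subseteq M$, define $S_X=X\cup P$ and $T_X=X\cup Q$. Then $|S_X\cap T_Y|=|X\cap Y|$, which is at least $s$ exactly when $X=Y$. Since $S_X\ne T_X$ (as $k-s\ge1$), the submatrix of any $A\in\cS^\F(J_{\ge s}(n,k))$ with rows indexed by $\{S_X\}$ and columns indexed by $\{T_Y\}$ is diagonal with nonzero diagonal entries $a_{S_X T_X}$. These entries are off-diagonal in $A$ and correspond to edges. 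Moreover, the row and column index sets are disjoint, so this is a genuine submatrix, and $\rank A\ge\binom{m}{s}$.

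\textbf{Upper bound.} I will look for vectors $v_S\in\F^N$, with $N=\binom{m}{s}$, and a symmetric bilinear form whose Gram values $\langle v_S,v_T\rangle$ vanish exactly when $|S\cap T|<s$. The natural candidate uses the $s$-th exterior power. Assign to each element $i\in[n]$ a vector $x_i\in\F^{m}$ from a moment-type curve (for instance $x_i=(1,t_i,\dots,t_i^{m-1})$ with distinct $t_i$), and define
\[
v_S=\sum_{X\in\binom{S}{s}} c_{S,X}\, \bigwedge_{i\in X} x_i\in \Lambda^s\F^m ,
\]
with coefficients $c_{S,X}$ to be chosen. With the induced form $\langle \wedge x_X,\wedge x_Y\rangle=\det(B(x_i,x_j))_{i\in X,j\in Y}$ for a symmetric form $B$ on $\F^m$, the Gram value is a sum of $s\times s$ minors.

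The key step is to choose $B$ and the coefficients so that, whenever $|S\cap T|<s$, the Gram value collapses to zero. I would try weights $c_{S,X}=\prod_{i\in X,\,j\in S\setminus X}(t_i-t_j)^{-1}$, which are Lagrange-interpolation weights. The identity $\sum_{i\in S} f(t_i)/\prod_{j\in S\setminus i}(t_i-t_j)=0$ for $\deg f<|S|-1$ should then annihilate the cross terms. The count $m=n-2(k-s)$ should appear as a degree bound: the $2(k-s)$ "private" elements of $S$ and $T$ are exactly what the interpolation kills.

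For adjacent pairs and the diagonal, the Gram value is a nonzero polynomial in the parameters $t_i$ and in those defining $B$. Nonzero-ness can be checked at one specialization, for example where $S\cap T$ dominates. Since $\F$ is infinite, a generic choice makes all of these finitely many polynomials nonvanishing simultaneously.

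\textbf{PSD version and main obstacle.} Over $\R$, I will choose $B$ of the form $B(x,y)=\sum_\ell w_\ell\, p_\ell(x)p_\ell(y)$ with interlacing nodes so that all $w_\ell>0$. Then $B$ is positive definite, so its exterior power is positive definite too, and the Gram matrix is PSD of rank at most $N$.

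The main obstacle is the vanishing identity. I need to verify that the Lagrange weights kill every pair with $|S\cap T|<s$, with the right degree accounting. I also need to check that the interlacing condition remains compatible with the genericity requirement. If the direct weights fail, I would instead fix $S\cap T$ and expand the Cauchy–Binet sum by $|X\cap Y|$, checking the vanishing term by term.
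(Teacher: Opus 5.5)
Your lower bound is correct and is exactly the paper's argument. The upper bound has a genuine gap. The vanishing of $A_{S,T}$ for $|S\cap T|<s$ is the entire content of that half, and you leave it as a hope. Worse, the concrete candidate you propose cannot work for any choice of $B$.

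Take $s=1$. Then $v_S=\sum_{i\in S}x_i/\prod_{j\in S\setminus\{i\}}(t_i-t_j)$ is the $k$-point divided difference of the moment curve. By the very identity you invoke, its coordinates of index $0,1,\ldots,k-2$ vanish. So every $v_S$ lies in a fixed subspace of dimension $\max(m-k+1,0)\le m-1$, and the Gram matrix has rank less than $m=\binom{m}{1}$. This contradicts your own lower bound; for $n=6$, $k=3$, $s=1$ one even gets $v_S=0$ for every $S$. The interpolation identity kills too much: it annihilates the vectors themselves, not just the cross terms. In addition, $B$ is never specified over a general field, and ``check at a specialization where $S\cap T$ dominates'' does not yet amount to a nonvanishing argument.

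The missing idea is to pair polynomial evaluations with a \emph{residue} form. The paper takes $f_S(x)=\prod_{j\notin S}(x-\alpha_j)$, $v_{S,a}=(\beta_i^a f_S(\beta_i))_{i=1}^m$, $z_S=v_{S,0}\wedge\cdots\wedge v_{S,s-1}$, and $B(u,v)=\sum_i u_iv_i/(p'(\beta_i)q(\beta_i))$, where $p=\prod_i(x-\beta_i)$ and $q=\prod_j(x-\alpha_j)$. Then $A_{S,T}=\det(M^{S,T}_{a,b})$, where $M^{S,T}_{a,b}$ is the sum of the residues at the $\beta_i$ of $x^{a+b}f_Sf_T/(pq)$. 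This rational function has degree $a+b-2s\le-2$, which is exactly where $m=n-2(k-s)$ enters. Its only other poles are at $\alpha_u$ for $u\in U=S\cap T$. Hence $M^{S,T}_{a,b}=-\sum_{u\in U}\lambda_u\alpha_u^{a+b}$, so $M^{S,T}=-V_U\Lambda_UV_U^{\top}$ has rank at most $|U|$, and its determinant vanishes when $|U|<s$.

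Nonvanishing on edges comes from the Cauchy--Binet expansion of this determinant over $X\in\binom{U}{s}$. For a fixed $X_0$ one specializes $\beta_{\eta(u)}=\alpha_u+\varepsilon_u$ and reads off the coefficient of $\prod_{u\in X_0}\varepsilon_u^{-1}$.

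Via partial fractions, the paper's $z_S$ is, up to a global sign and a fixed invertible linear map, your ansatz with three corrections:
\begin{itemize}
\item replace $x_i$ by the Cauchy vectors $(1/(\beta_l-t_i))_{l=1}^m$, that is, $x_i/p(t_i)$ up to a fixed change of basis, so that divided differences no longer collapse;
\item multiply $c_{S,X}$ by $\prod_{i,j\in X,\,i<j}(t_j-t_i)^{-1}$;
\item take $B$ to be the corresponding residue pairing.
\end{itemize}
Your interlacing idea then works as you hoped: it makes these particular weights positive. Genericity is compatible with it because a nonzero polynomial cannot vanish on the nonempty open interlacing region.
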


\begin{remark}[The remaining values of \(n\)]\label{rem:complete-regime}
The condition \(n\ge2k-s\) marks the boundary of the nontrivial parameter
range.  Indeed, for any \(S,T\in\binom{[n]}{k}\),
\[
        |S\cap T|
        =
        |S|+|T|-|S\cup T|
        \ge 2k-n.
\]
Hence, if \(k<n\le2k-s\), then every two distinct vertices have intersection
of size at least \(s\), and therefore $J_{\ge s}(n,k)=K_{\binom{n}{k}}$. It follows that, for every field \(\F\),
\[
        \mr^\F\!\left(J_{\ge s}(n,k)\right)
        =
        \mrp^\R\!\left(J_{\ge s}(n,k)\right)
        =
        1,
\]
where the second equality is understood over \(\R\).  At \(n=2k-s\), this
agrees with \cref{thm:main}, since
\[
        \binom{n-2(k-s)}{s}=\binom{s}{s}=1.
\]
If \(n=k\), the graph has a single vertex and both parameters are \(0\).
Thus \cref{thm:main} also yields, together with these elementary cases, a
complete determination for every \(n\ge k\).
\end{remark}

For \(s=1\), \cref{thm:main} gives $\mr^\F\left(J_{\ge1}(n,k)\right)=n-2k+2$ for every infinite field \(\F\).  Thus it answers \cref{prob:aim} and
establishes the same formula over every infinite field.  At the opposite
endpoint \(s=k-1\), where \(J_{\ge k-1}(n,k)=J(n,k)\), it gives, for every
infinite field \(\F\),
\[
\mr^\F\!\left(J(n,k)\right) = \mrp^\R\!\left(J(n,k)\right) = \binom{n-2}{k-1}.
\]
For \(n\ge 2k\), this recovers the ordinary and positive
semidefinite minimum-rank values determined by Fallat, Gupta,
Herman, and Parenteau \cite[Theorem~25]{FallatGuptaHermanParenteau2024}.
For \(k<n<2k\), the same conclusion follows from the isomorphism
\(J(n,k)\cong J(n,n-k)\) and an application of the same theorem
with \(d=n-k\).

For graphs without isolated vertices, positive semidefinite minimum rank has
an equivalent formulation in terms of faithful orthogonal representations. We use the
nonzero-vector convention: a \emph{faithful orthogonal representation} of a graph
$G$ in $\R^d$ is an assignment of a nonzero vector $x_v\in\R^d$ to each
vertex $v$ such that, for all distinct $u,v\in V(G)$,
\[
        x_u\cdot x_v=0
        \quad\Longleftrightarrow\quad
        \{u,v\}\notin E(G).
\]
Let $\fod^\R(G)$ denote the least integer $d$ for which $G$ admits such a representation in $\R^d$. 

For graphs without isolated vertices, this definition agrees with the notion of a faithful
orthogonal representation used by Lov\'asz, Saks and Schrijver
\cite{LovaszSaksSchrijver1989}. For ordinary, not necessarily
faithful, orthogonal representations, Haviv likewise uses the convention
that nonadjacent vertices are represented by orthogonal nonzero vectors
\cite{Haviv2019}.  Golovnev and Haviv instead use the complementary
convention, in which orthogonality is imposed on adjacent vertices
\cite{GolovnevHaviv2022}.  Thus statements about orthogonality dimension in
their convention translate to ours by passing to the complement graph.

Under the hypotheses of \cref{thm:main}, the graph
\(J_{\ge s}(n,k)\) has no isolated vertices.  Hence the standard Gram-matrix
correspondence, recalled in Lemma~\ref{lem:fod-psd}, shows that
\[
        \fod^\R\!\left(J_{\ge s}(n,k)\right)
        =
        \mrp^\R\!\left(J_{\ge s}(n,k)\right)
        =
        \binom{n-2(k-s)}{s}.
\]
In particular, if \(n\ge8\) is divisible by \(4\), then
\[
        \fod^\R\!\left(\overline{\KG(n,n/4)}\right)
        =
        \frac{n}{2}+2<n,
\]
thereby answering a question of Golovnev \cite{GolovnevMO}.

\subsection*{Paper organization}

In \cref{sec:lower}, we recall the Gram-matrix correspondence between
faithful orthogonal representations and positive semidefinite minimum
rank, and prove the lower bound in \cref{thm:main} over every field.
In \cref{sec:algebraic}, we establish the matching upper bound over every
infinite field by means of an exterior-power construction, a Lagrange
interpolation identity, and a generic nonvanishing argument.
In \cref{sec:psd}, we choose the parameters in an interlacing
configuration so that the same construction becomes positive semidefinite
over \(\R\), thereby completing the proof of \cref{thm:main}.
Finally, \cref{sec:consequences} records consequences for the case \(s=1\)
and for faithful orthogonality dimension, together with comparisons with
related results and a finite-field obstruction.

\section{Preliminaries and the lower bound}\label{sec:lower}

We record the following standard Gram-factorization correspondence; see, for
example, \cite[Section~1.3]{HogbenPalmowskiRobersonSeverini2017}.  For
completeness, we include a short proof.

\begin{lemma}\label{lem:fod-psd}
If \(G\) is a finite simple graph with no isolated vertices, then
\[
        \fod^\R(G)=\mrp^\R(G).
\]
\end{lemma}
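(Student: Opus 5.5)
We prove the two inequalities separately, using the Gram-matrix correspondence in both directions. Throughout, $V=V(G)$ and $m=|V|$.

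For $\mrp^\R(G)\le\fod^\R(G)$, let $d=\fod^\R(G)$ and fix a faithful orthogonal representation $(x_v)_{v\in V}$ in $\R^d$. Form the $d\times m$ matrix $X$ whose columns are the $x_v$, and set $A=X^\top X$, the Gram matrix with entries $a_{uv}=x_u\cdot x_v$.

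1. Since $A$ is a Gram matrix, $A\succeq0$.
2. For distinct $u,v$, faithfulness gives $a_{uv}\ne0$ exactly when $\{u,v\}\in E(G)$, so $A\in\cSp^\R(G)$.
3. Finally, $\rank A=\rank X\le d$.

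Hence $\mrp^\R(G)\le d$.

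For the reverse inequality $\fod^\R(G)\le\mrp^\R(G)$, take $A\in\cSp^\R(G)$ of rank $r=\mrp^\R(G)$. Because $A$ is real symmetric positive semidefinite of rank $r$, the spectral decomposition lets us write $A=X^\top X$ with $X$ an $r\times m$ real matrix. Let $x_v\in\R^r$ be the columns of $X$, so that $x_u\cdot x_v=a_{uv}$. Off the diagonal, the zero--nonzero pattern of $A$ is exactly the graph's pattern, so the orthogonality condition holds for all distinct $u,v$.

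The only remaining point, and the one place the hypothesis is used, is that each $x_v$ must be nonzero. Since $G$ has no isolated vertices, every $v$ has a neighbour $u$. Then $x_u\cdot x_v=a_{uv}\ne0$, which forces $x_v\ne0$. (Equivalently, $a_{vv}>0$.) So $(x_v)$ is a faithful orthogonal representation in $\R^r$, and $\fod^\R(G)\le r$.

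Combining the two inequalities gives $\fod^\R(G)=\mrp^\R(G)$.
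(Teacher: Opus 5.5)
Your proof is correct and follows essentially the same approach as the paper. The Gram matrix of a faithful representation gives $\mrp^\R(G)\le\fod^\R(G)$, and a rank-$r$ factorization $A=X^\top X$ of a matrix in $\cSp^\R(G)$ gives the reverse inequality, with the columns forced to be nonzero because $G$ has no isolated vertices.
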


\begin{proof}
A faithful orthogonal representation in \(\R^d\) has a positive semidefinite
Gram matrix in \(\cSp^\R(G)\) of rank at most \(d\).  Hence
\(\mrp^\R(G)\le\fod^\R(G)\).

Conversely, let \(A\in\cSp^\R(G)\) have rank \(r\).  Since \(A\succeq0\),
there exists \(X\in\R^{r\times |V(G)|}\) such that \(A=X^\top X\).
Writing \(x_v\) for the column of \(X\) indexed by \(v\), we have
\(a_{uv}=x_u\cdot x_v\).  For distinct \(u,v\), the prescribed off-diagonal pattern of \(A\) gives
\[
x_u\cdot x_v=0
\quad\Longleftrightarrow\quad
\{u,v\}\notin E(G).
\]  
Since \(G\) has no isolated vertices, each \(x_v\) is
nonzero.  Hence \(\fod^\R(G)\le r\), and minimizing over \(A\) completes the
proof.
\end{proof}

The next proposition gives the lower bound in \cref{thm:main}.  It also
follows from the zero forcing number of the generalized Johnson graph
\(J_{\ge s}(n,k)=J_{\{s,s+1,\ldots,k-1\}}(n,k)\), determined by Abiad,
Simoens, and Zeijlemaker
\cite[Theorem~8]{AbiadSimoensZeijlemaker2024}, together with the standard
zero-forcing bound on maximum nullity
\cite[Proposition~2.4]{AIMZeroForcing2008}.  We include a short direct proof
because it is elementary, uses only the prescribed off-diagonal
zero--nonzero pattern, and makes the field-independence transparent.

\begin{proposition}\label{prop:lower}
Let \(k\ge2\), \(1\le s\le k-1\), and \(n\ge2k-s\).  Then, for every field \(\F\),
\[
        \mr^\F\!\left(J_{\ge s}(n,k)\right)
        \ge
        \binom{n-2(k-s)}{s}.
\]
In particular,
\[
        \mrp^\R\!\left(J_{\ge s}(n,k)\right)
        \ge
        \binom{n-2(k-s)}{s}.
\]
\end{proposition}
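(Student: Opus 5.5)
The plan is to exhibit, inside every matrix \(A\in\cS^\F(J_{\ge s}(n,k))\), a square submatrix of size \(\binom{n-2(k-s)}{s}\) whose row and column index sets are disjoint and whose zero--nonzero pattern is forced to be that of a diagonal matrix with nonzero diagonal. Such a submatrix is invertible over any field, so \(\rank A\ge\binom{n-2(k-s)}{s}\). The argument never uses the diagonal entries of \(A\), so it works for all \(A\) and all fields at once.

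\emph{Construction.} Partition \([n]=X\cup Y\cup Z\) with \(|Y|=|Z|=k-s\) and \(|X|=n-2(k-s)\). The hypothesis \(n\ge 2k-s\) gives \(|X|\ge s\), so \(\binom{X}{s}\) is nonempty. For each \(P\in\binom{X}{s}\), set
\[
        A_P:=P\cup Y,\qquad B_P:=P\cup Z .
\]
Both are \(k\)-subsets of \([n]\). Since \(Y\cap Z=\emptyset\) and both are disjoint from \(X\), we have
\[
        A_P\cap B_Q=P\cap Q .
\]
This set has size exactly \(s\) when \(P=Q\), and size at most \(s-1\) when \(P\ne Q\). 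Moreover, \(A_P\ne B_Q\) for all \(P,Q\): because \(k-s\ge1\), the set \(Y\) is nonempty, \(Y\subseteq A_P\), and \(Y\cap B_Q=\emptyset\). The \(A_P\) are pairwise distinct, and so are the \(B_Q\).

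\emph{Conclusion.} Consider the submatrix \(M\) of \(A\) with rows indexed by \(\{A_P\}\) and columns by \(\{B_Q\}\). Every entry of \(M\) lies off the diagonal of \(A\), because its row and column vertices are distinct. Hence every entry is governed by the prescribed pattern: \(M_{PQ}\ne0\) if and only if \(|A_P\cap B_Q|\ge s\), that is, if and only if \(P=Q\). Thus \(M\) is diagonal with nonzero diagonal, so \(\rank A\ge\rank M=\binom{n-2(k-s)}{s}\). Minimizing over \(A\) gives the first inequality. The positive semidefinite statement then follows from \(\mr^\R\le\mrp^\R\).

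There is no real obstacle here. The only point needing care is to ensure that no entry of \(M\) is a diagonal entry of \(A\). This is exactly why \(Y\) and \(Z\) are taken disjoint and nonempty, which uses \(s\le k-1\).
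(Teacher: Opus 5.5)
Your proof is correct and is essentially the paper's own argument. It uses the same partition of $[n]$ into two disjoint blocks of size $k-s$ and a remainder of size $n-2(k-s)$, the same families $P\cup Y$ and $P\cup Z$ indexed by $\binom{X}{s}$, and the same observation that the cross submatrix is diagonal with nonzero diagonal entries.
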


\begin{proof}
Set
\[
        t=k-s,
        \qquad
        m=n-2t=n-2(k-s).
\]
The assumption \(n\ge2k-s\) is equivalent to \(m\ge s\). Partition $[n]$ into three
pairwise disjoint sets
\[
        P,\quad Q,\quad R,
        \qquad
        |P|=|Q|=t,
        \quad
        |R|=m.
\]
For each $X\in\binom{R}{s}$, define $S_X=P\cup X$ and $T_X=Q\cup X$. Then $S_X$ and $T_X$ are distinct $k$-subsets of $[n]$ since $t\ge1$.  For $X,Y\in\binom{R}{s}$ one has $S_X\cap T_Y=X\cap Y$, and therefore
\[
        |S_X\cap T_Y|\ge s
        \quad\Longleftrightarrow\quad
        X=Y.
\]

Let $A\in\cS^\F(J_{\ge s}(n,k))$.  The submatrix of $A$ with rows indexed by the
sets $S_X$ and columns indexed by the sets $T_Y$, where
$X,Y\in\binom{R}{s}$, is diagonal.  Its diagonal entries are nonzero because
$S_X$ and $T_X$ are distinct adjacent vertices.  Hence this submatrix has rank
$\binom{m}{s}$, and so
\[
        \rank A\ge \binom{m}{s}
        =
        \binom{\,n-2(k-s)\,}{s}.
\]
Minimizing over $A$ gives the ordinary lower bound.  The positive semidefinite
lower bound follows from
$\cSp^\R(J_{\ge s}(n,k))\subseteq\cS^\R(J_{\ge s}(n,k))$.
\end{proof}

\section{An exterior-power construction over infinite fields}\label{sec:algebraic}

In this section we prove the matching upper bound for the minimum rank over an arbitrary infinite field.  Throughout the section, fix an infinite field $\F$ and integers $n,k,s$ satisfying
\[
        k\ge2,
        \qquad
        1\le s\le k-1,
        \qquad
        n\ge2k-s.
\]
Set
\[
        t=k-s,
        \qquad
        m=n-2t=n-2(k-s).
\]
Thus $t\ge1$ and $m\ge s$.

\subsection{The exterior-power construction}\label{subsec:gram}

We recall the standard exterior-power construction over \(\F\).
For the determinant formula defining the exterior power of a bilinear form,
compare \cite[Definition~3.1]{McGarraghy2002}. 

If \(V\) is a finite-dimensional vector space over \(\F\), then
\(\bigwedge^s V\) is generated by symbols $v_1\wedge\cdots\wedge v_s$, modulo multilinearity and the alternating relations.
If \(\dim V=m\) and \(e_1,\ldots,e_m\) is a basis of \(V\), then $\{e_{i_1}\wedge\cdots\wedge e_{i_s}: 1\le i_1<\cdots<i_s\le m\}$ forms a basis of \(\bigwedge^sV\). Hence $\dim\bigwedge^s V=\binom{m}{s}$.

Fix a basis \(e_1,\ldots,e_m\) of \(V\).  For $I=\{i_1<\cdots<i_s\}\subseteq[m]$, write $e_I=e_{i_1}\wedge\cdots\wedge e_{i_s}$. Every bilinear form \(B\) on \(V\) determines a unique bilinear form
\(B^{\wedge s}\) on \(\bigwedge^sV\) by
\[
B^{\wedge s}(e_I,e_J)
=
\det\bigl(B(e_{i_a},e_{j_b})\bigr)_{a,b=1}^s
\]
for $I=\{i_1<\cdots<i_s\}$, $J=\{j_1<\cdots<j_s\}$ and by bilinear extension. Indeed, the values of a bilinear form on all pairs of basis vectors
determine the form uniquely. If \(B\) is symmetric, then \(B^{\wedge s}\) is symmetric.  Over
\(\R\), if \(B\) is positive definite, then \(B^{\wedge s}\) is
positive definite as well.

Let $\alpha_1,\ldots,\alpha_n,\beta_1,\ldots,\beta_m$ be pairwise distinct elements of $\F$. Define
\[
        q(x)=\prod_{j=1}^n(x-\alpha_j),
        \qquad
        p(x)=\prod_{i=1}^m(x-\beta_i),
\]
and
\[
        w_i=\frac{1}{p'(\beta_i)q(\beta_i)},
        \qquad
        i=1,\ldots,m.
\]
All $w_i$ are nonzero.  Let $B$ be the nondegenerate symmetric bilinear form on
$\F^m$ given by
\[
        B(u,v)=\sum_{i=1}^m w_i u_i v_i.
\]
For each $S\in\binom{[n]}{k}$, put
\[
        f_S(x)=\prod_{j\notin S}(x-\alpha_j).
\]
This choice is arranged so that the quotient
\(f_S(x)f_T(x)/q(x)\) has simple poles precisely at the points
\(\alpha_u\) with \(u\in S\cap T\).
For $0\le a\le s-1$, define $v_{S,a}=\bigl(\beta_i^a f_S(\beta_i)\bigr)_{i=1}^m\in\F^m$, and set
\[
        z_S=v_{S,0}\wedge v_{S,1}\wedge\cdots\wedge v_{S,s-1}
        \in\bigwedge^s\F^m.
\]
Let \(B^{\wedge s}\) be the induced bilinear form on
\(\bigwedge^s\F^m\) defined above.
Define the matrix
\begin{equation}\label{eq:A-definition}
        A=(A_{S,T})_{S,T\in\binom{[n]}{k}},
        \qquad
        A_{S,T}=B^{\wedge s}(z_S,z_T).
\end{equation}
Since \(B\) is symmetric, so is \(B^{\wedge s}\), and hence \(A\) is symmetric.
Equivalently,
\begin{equation}\label{eq:det-formula}
        A_{S,T}=\det\bigl(M^{S,T}_{a,b}\bigr)_{a,b=0}^{s-1},
\end{equation}
where
\begin{equation}\label{eq:M-entry}
        M^{S,T}_{a,b}
        =B(v_{S,a},v_{T,b})
        =
        \sum_{i=1}^m
        \frac{\beta_i^{a+b}f_S(\beta_i)f_T(\beta_i)}
             {p'(\beta_i)q(\beta_i)}.
\end{equation}
Since $A$ is a Gram matrix of vectors in $\bigwedge^s\F^m$ with respect to the
bilinear form $B^{\wedge s}$,
\begin{equation}\label{eq:rank-upper-general}
        \rank A\le \dim\bigwedge^s\F^m=\binom{m}{s}.
\end{equation}
The remaining issue is to choose the parameters so that the off-diagonal
zero--nonzero pattern of $A$ is exactly that of $J_{\ge s}(n,k)$.

\subsection{A Lagrange interpolation identity}
\label{subsec:interpolation}
The following identity expresses $M^{S,T}_{a,b}$ in terms of
\(S\cap T\), and will yield the required vanishing of \(A_{S,T}\)
when \(|S\cap T|<s\).

\begin{lemma}\label{lem:moment-identity}
Let $S,T\in\binom{[n]}{k}$. Put $U=S\cap T$ and $W=[n]\setminus(S\cup T)$. For $u\in U$, define\footnote{This is well defined because all the \(\alpha_j\)'s and \(\beta_i\)'s
are pairwise distinct.}
\begin{equation}\label{eq:lambda-u}
        \lambda_u
        =
        \frac{\prod_{j\in W}(\alpha_u-\alpha_j)}
             {p(\alpha_u)
              \prod_{v\in U\setminus\{u\}}(\alpha_u-\alpha_v)}.
\end{equation}
Then, for $0\le a,b\le s-1$,
\begin{equation}\label{eq:moment-identity}
        M^{S,T}_{a,b}
        =
        -\sum_{u\in U}\lambda_u\alpha_u^{a+b}.
\end{equation}
If $U=\varnothing$, the sum is interpreted as $0$.
\end{lemma}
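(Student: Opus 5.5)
The plan is to apply the residue theorem, in its elementary form as partial fractions, to the rational function
\[
        R(x)=\frac{x^{a+b}f_S(x)f_T(x)}{p(x)q(x)}.
\]
First we simplify. Since $f_S(x)=\prod_{j\notin S}(x-\alpha_j)$, we have
\[
        f_S f_T/q=\prod_{j\in W}(x-\alpha_j)\Big/\prod_{u\in U}(x-\alpha_u).
\]
Indeed, an index $j$ lying outside $S\cup T$ appears twice in the numerator and once in $q$. An index lying in exactly one of $S,T$ appears once in the numerator and once in $q$, so it cancels. An index in $U$ appears only in $q$. Hence
\[
        R(x)=\frac{x^{a+b}\prod_{j\in W}(x-\alpha_j)}{p(x)\prod_{u\in U}(x-\alpha_u)}.
\]
All the poles are simple and lie at the distinct points $\beta_i$ and $\alpha_u$ ($u\in U$).

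Next we count degrees. Write $|U|=r$. Then $|S\cup T|=2k-r$, so $|W|=n-2k+r$. The numerator has degree $a+b+n-2k+r\le 2s-2+n-2k+r$. The denominator has degree $m+r=n-2k+2s+r$. The numerator degree is therefore at most the denominator degree minus $2$. Consequently the partial-fraction expansion $R(x)=\sum_\gamma \operatorname{res}_\gamma R/(x-\gamma)$ has no polynomial part. Multiplying by $x$ and comparing coefficients of $x^{-1}$ as $x\to\infty$, which is legitimate over any field by viewing $R$ in $\F((x^{-1}))$, shows that the sum of all residues is zero. Over an arbitrary field, the cleanest route is the Lagrange interpolation identity: for distinct nodes $\gamma_1,\dots,\gamma_N$ and a polynomial $g$ with $\deg g\le N-2$, we have $\sum_\ell g(\gamma_\ell)/\prod_{\ell'\ne\ell}(\gamma_\ell-\gamma_{\ell'})=0$. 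This holds because the left side is the leading ($x^{N-1}$) coefficient of the interpolating polynomial of $g$, which equals $g$ itself.

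It remains to identify the residues. At $\beta_i$ the residue is $\beta_i^{a+b}f_S(\beta_i)f_T(\beta_i)/(p'(\beta_i)q(\beta_i))$, computed before the cancellation, which is legitimate since $q(\beta_i)\ne0$. Summing over $i$ gives exactly $M^{S,T}_{a,b}$ by \eqref{eq:M-entry}. At $\alpha_u$ with $u\in U$ the residue is
\[
        \frac{\alpha_u^{a+b}\prod_{j\in W}(\alpha_u-\alpha_j)}{p(\alpha_u)\prod_{v\in U\setminus\{u\}}(\alpha_u-\alpha_v)}=\lambda_u\alpha_u^{a+b}.
\]
Setting the total sum of residues to zero yields \eqref{eq:moment-identity}, and the case $U=\varnothing$ is included automatically.

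The only delicate point is the degree count that guarantees the vanishing of the total residue; this is exactly where the constraint $a,b\le s-1$ and the choice $m=n-2(k-s)$ enter. Everything else is bookkeeping of the cancellations in $f_Sf_T/q$.
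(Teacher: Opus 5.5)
Your proposal is correct and follows essentially the same route as the paper: you simplify $f_Sf_T/q$ to $\prod_{j\in W}(x-\alpha_j)/\prod_{u\in U}(x-\alpha_u)$, note that $x^{a+b}f_Sf_T/(pq)$ has degree at most $-2$, and conclude that the residues at the $\beta_i$ (summing to $M^{S,T}_{a,b}$) and at the $\alpha_u$ (equal to $\lambda_u\alpha_u^{a+b}$) add up to zero via the $x^{-1}$ coefficient in $\F((x^{-1}))$. Your Lagrange-interpolation phrasing of the vanishing total residue is only an equivalent formulation of that same step.
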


\begin{proof}
Set $R_{S,T}(x) := f_S(x)f_T(x)/q(x)$. Thus we obtain
\[
        R_{S,T}(x) = \frac{\left(\prod_{j\notin S}(x-\alpha_j)\right)\left(\prod_{j\notin T}(x-\alpha_j)\right)}
             {\prod_{j\in [n]}(x-\alpha_j)}
        =
        \frac{\prod_{j\in W}(x-\alpha_j)}
             {\prod_{u\in U}(x-\alpha_u)}.
\]
Moreover
\[
        |W|=n-2k+|U|=m-2s+|U|.
\]

By \eqref{eq:M-entry},
\[
        M^{S,T}_{a,b}
        =
        \sum_{i=1}^m
        \frac{\beta_i^{a+b}R_{S,T}(\beta_i)}
             {p'(\beta_i)}.
\]
Consider the rational function
\[
        H_{a,b}(x)
        :=
        \frac{x^{a+b}R_{S,T}(x)}{p(x)}
        =
        \frac{x^{a+b}\prod_{j\in W}(x-\alpha_j)}
             {p(x)\prod_{u\in U}(x-\alpha_u)}.
\]
The degree of its numerator minus the degree of its denominator is
\[
        a+b+|W|-m-|U|
        =
        a+b-2s
        \le -2.
\]
Consequently, the formal Laurent expansion of \(H_{a,b}(x)\) in
\(\F((x^{-1}))\) has zero coefficient of \(x^{-1}\).

All finite poles of \(H_{a,b}\) are simple and lie among the points
\(\beta_i\), \(1\le i\le m\), and \(\alpha_u\), \(u\in U\). In the
partial-fraction decomposition of \(H_{a,b}\), the coefficient of
\((x-\beta_i)^{-1}\) is $\beta_i^{a+b}R_{S,T}(\beta_i)/p'(\beta_i)$,
and the sum of these coefficients is \(M^{S,T}_{a,b}\). Similarly,
the coefficient of $(x-\alpha_u)^{-1}$ is
$\lambda_u\alpha_u^{a+b}$, with $\lambda_u$ as in \eqref{eq:lambda-u}.  Hence
\[
        M^{S,T}_{a,b}+\sum_{u\in U}\lambda_u\alpha_u^{a+b}=0,
\]
which is \eqref{eq:moment-identity}.
\end{proof}

\begin{lemma}\label{lem:vanishing}
If $S,T\in\binom{[n]}{k}$ satisfy $|S\cap T|<s$, then
$A_{S,T}=0$.
\end{lemma}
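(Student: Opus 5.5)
By \cref{lem:moment-identity}, when \(|S\cap T|<s\) the \(s\times s\) matrix \(M^{S,T}=(M^{S,T}_{a,b})_{a,b=0}^{s-1}\) has a low-rank factorization, so its determinant vanishes. By \eqref{eq:det-formula}, that determinant is \(A_{S,T}\).

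Put \(U=S\cap T\) and \(r=|U|<s\). If \(U=\varnothing\), then \(M^{S,T}\) is the zero matrix by the convention in \cref{lem:moment-identity}, and since \(s\ge1\) its determinant is \(0\). In general, \eqref{eq:moment-identity} gives the factorization
\[
        M^{S,T} = -\,V^{\top} D\, V,
\]
where \(V\) is the \(r\times s\) Vandermonde-type matrix with rows indexed by \(u\in U\), columns indexed by \(a=0,\dots,s-1\), and entries \(V_{u,a}=\alpha_u^{a}\), and \(D=\operatorname{diag}(\lambda_u)_{u\in U}\). Indeed, the \((a,b)\) entry of \(V^\top D V\) is \(\sum_{u\in U}\alpha_u^a\lambda_u\alpha_u^b\).

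Hence \(\rank M^{S,T}\le r<s\). A square \(s\times s\) matrix of rank less than \(s\) is singular over any field, so \(\det M^{S,T}=0\), and therefore \(A_{S,T}=0\).

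There is no real obstacle. The only point to check is that the factorization holds for all \(0\le a,b\le s-1\) simultaneously. This is exactly the range covered by \cref{lem:moment-identity}, where the degree condition \(a+b-2s\le -2\) was used. No genericity of the parameters is needed for this vanishing direction.
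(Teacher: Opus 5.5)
Your proof is correct and follows essentially the same route as the paper. Like the paper, you use \cref{lem:moment-identity} to factor the $s\times s$ matrix $(M^{S,T}_{a,b})$ through a Vandermonde-type matrix with $|U|<s$ columns (yours is written as the transpose of the paper's $V_U$), so its rank is at most $|U|$ and its determinant $A_{S,T}$ vanishes.
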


\begin{proof}
Let \(U=S\cap T\). By Lemma~\ref{lem:moment-identity}, the matrix
\((M^{S,T}_{a,b})_{a,b=0}^{s-1}\) factors as $-V_U\Lambda_U V_U^{\top}$, where
\[
        V_U=(\alpha_u^a)_{0\le a\le s-1,\ u\in U},
        \qquad
        \Lambda_U=\operatorname{diag}(\lambda_u:u\in U).
\]
Its rank is at most \(|U|<s\). Therefore, its determinant is zero, and
\eqref{eq:det-formula} gives \(A_{S,T}=0\).
\end{proof}

\subsection{Generic nonvanishing on edges}\label{subsec:nonvanishing}

It remains to show that the parameters can be chosen so that $A_{S,T}$ is
nonzero for every distinct pair $S,T$ with $|S\cap T|\ge s$.  We first prove
that each such entry is not identically zero as a rational function of the
parameters.

\begin{lemma}\label{lem:generic-nonzero}
Let $\F$ be any field, and regard
$\alpha_1,\ldots,\alpha_n,\beta_1,\ldots,\beta_m$ as algebraically independent
variables over $\F$.  If $S,T\in\binom{[n]}{k}$ satisfy $|S\cap T|\ge s$, then
\[
        \Delta_{S,T}(\alpha,\beta)
        :=
        \det\bigl(M^{S,T}_{a,b}\bigr)_{a,b=0}^{s-1}
\]
is not the zero rational function in
$\F(\alpha_1,\ldots,\alpha_n,\beta_1,\ldots,\beta_m)$.
\end{lemma}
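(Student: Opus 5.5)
The plan is to use \cref{lem:moment-identity}, which holds verbatim over the rational function field $\F(\alpha,\beta)$, and to reduce to the case $|S\cap T|=s$ by specializing variables. Put $U=S\cap T$ with $r=|U|\ge s$, and $W=[n]\setminus(S\cup T)$. As in the proof of \cref{lem:vanishing}, the matrix $(M^{S,T}_{a,b})$ equals $-V_U\Lambda_UV_U^\top$. The Cauchy--Binet formula then gives
\[
\Delta_{S,T}=(-1)^s\sum_{X\in\binom{U}{s}}\Bigl(\prod_{u\in X}\lambda_u\Bigr)\bigl(\det V_X\bigr)^2,
\]
where $V_X=(\alpha_u^a)_{0\le a\le s-1,\,u\in X}$ is a square Vandermonde matrix. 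This expresses $\Delta_{S,T}$ as a rational function depending only on the variables $\alpha_u$ ($u\in U$), $\alpha_j$ ($j\in W$), and the $\beta_i$.

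\textbf{Base case $r=s$.} The sum has the single term $X=U$. Each $\lambda_u$ is a nonzero rational function: its numerator $\prod_{j\in W}(\alpha_u-\alpha_j)$ and its denominator are products of nonzero differences of distinct variables. The Vandermonde determinant $\prod_{u<v}(\alpha_v-\alpha_u)$ is also nonzero. Hence $\Delta_{S,T}\ne0$.

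\textbf{Inductive step $r>s$.} I would show that a suitable specialization of $\Delta_{S,T}$ equals an expression of the same form with $r$ replaced by $r-1$. Since $|W|=m-2s+r\ge r-s\ge1$, we may pick $w\in W$ and $u_0\in U$, and substitute $\alpha_{u_0}\mapsto\alpha_w$. First I would check that this substitution is legitimate. The only denominators occurring in the $\lambda_u$ are $p(\alpha_u)$ and the differences $\alpha_u-\alpha_v$ with $u,v\in U$; these never involve $\alpha_{u_0}-\alpha_w$. So every $\lambda_u$, and hence $\Delta_{S,T}$, lies in the localization in which $\alpha_{u_0}-\alpha_w$ is not inverted, and the substitution is a ring homomorphism there. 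If the specialized value is nonzero, then $\Delta_{S,T}$ itself is nonzero.

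Next I would compute the specialization. The factor $(\alpha_{u_0}-\alpha_w)$ in the numerator of $\lambda_{u_0}$ makes $\lambda_{u_0}\mapsto0$, so every term with $u_0\in X$ vanishes. For $u\in U\setminus\{u_0\}$, the factor $(\alpha_u-\alpha_w)$ in the numerator of $\lambda_u$ cancels the factor $(\alpha_u-\alpha_{u_0})$ in its denominator. Consequently $\lambda_u$ becomes exactly the $\lambda$-coefficient for the configuration $U'=U\setminus\{u_0\}$, $W'=W\setminus\{w\}$. This configuration still satisfies $|W'|=m-2s+|U'|$, which is the only relation used in \cref{lem:moment-identity}, and it involves only the remaining distinct variables. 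The Vandermonde factors for $X\subseteq U'$ are unaffected by the substitution. Therefore the specialized $\Delta_{S,T}$ equals $(-1)^s\sum_{X\in\binom{U'}{s}}\prod_{u\in X}\lambda'_u(\det V_X)^2$.

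\textbf{Conclusion.} The specialized expression is precisely the Cauchy--Binet expansion for a pair with intersection size $r-1\ge s$; equivalently, it is the analogous determinant with $U'$ and $W'$ in place of $U$ and $W$. Induction on $r$ then yields the nonvanishing. The main care point is the well-definedness of the substitution and the exact cancellation in $\lambda_u$. Once that bookkeeping is verified, the argument is short.
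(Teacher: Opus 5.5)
Your proof is correct, but it takes a different route from the paper.

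The paper also starts from the Cauchy--Binet expansion, but it isolates one summand in a single step using poles. It fixes $X_0\in\binom{U}{s}$ and uses $m\ge s$ to choose an injection $\eta:X_0\hookrightarrow[m]$. It then specializes $\beta_{\eta(u)}=\alpha_u+\varepsilon_u$. Only the $X_0$-summand carries the full pole $\prod_{u\in X_0}\varepsilon_u^{-1}$, coming from the factor $p(\alpha_u)$ in the denominator of $\lambda_u$. The coefficient of that pole is visibly a nonzero rational function.

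You use zeros of the $\lambda_u$ instead of poles. The numerator factors indexed by $W$ let the substitution $\alpha_{u_0}\mapsto\alpha_w$ annihilate every summand containing $u_0$. The cancellation of $(\alpha_u-\alpha_w)$ against $(\alpha_u-\alpha_{u_0})$ then reproduces the same structure one size down.

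Your induction hypothesis can be made completely concrete. The specialized value is literally $\Delta_{S,T'}$ with $T'=(T\setminus\{u_0\})\cup\{w\}$. Indeed $S\cap T'=U\setminus\{u_0\}$ and $[n]\setminus(S\cup T')=W\setminus\{w\}$, and neither $\alpha_{u_0}$ nor $\alpha_w$ appears in $\Delta_{S,T'}$. So the step is an honest induction on $|S\cap T|$ over pairs of $k$-subsets.

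Your two bookkeeping checks are exactly what is needed:
\begin{itemize}
\item $|W|=m-2s+r\ge r-s\ge1$. This is where $m\ge s$ enters, playing the role of the injection $\eta$ in the paper.
\item No denominator is divisible by $\alpha_{u_0}-\alpha_w$, so the substitution is a ring homomorphism on the corresponding localization.
\end{itemize}

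The paper's approach gives a direct, non-inductive argument. Yours uses only specialization homomorphisms on localized polynomial rings, so it avoids justifying a multivariable Laurent expansion. It also shows that $\Delta_{S,T}$ reduces to $\Delta_{S,T'}$ when an intersection point is merged with an outside point. Performing your substitution simultaneously for $r-s$ pairs $(u,w_u)$, with $u\in U\setminus X_0$ and distinct $w_u\in W$, gives a one-shot version of the same argument.
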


\begin{proof}
Let $U=S\cap T$ and $W=[n]\setminus(S\cup T)$. By Lemma \ref{lem:moment-identity}, $(M^{S,T}_{a,b})_{a,b=0}^{s-1}=-V_U\Lambda_U V_U^{\top}$, where
\[
        V_U=(\alpha_u^a)_{0\le a\le s-1,\ u\in U},
        \qquad
        \Lambda_U=\operatorname{diag}(\lambda_u:u\in U).
\]
The Cauchy--Binet formula gives
\begin{equation}\label{eq:cauchy-binet-edge}
        \Delta_{S,T}
        =
        (-1)^s
        \sum_{X\in\binom{U}{s}}
        \left(\prod_{u\in X}\lambda_u\right)
        \prod_{\substack{u,v\in X\\u<v}}(\alpha_v-\alpha_u)^2.
\end{equation}
Choose one set $X_0\in\binom{U}{s}$.  Since $m\ge s$, choose an injection
\[
        \eta:X_0\hookrightarrow [m].
\]
Introduce independent variables $\eps_u$, $u\in X_0$, and make the
specialization
\[
        \beta_{\eta(u)}=\alpha_u+\eps_u
        \qquad (u\in X_0),
\]
leaving all other $\alpha$'s and $\beta$'s algebraically independent.  View the
right-hand side of \eqref{eq:cauchy-binet-edge} as a Laurent series in the
variables $\eps_u$.

The summand indexed by $X_0$ has a Laurent term with pole part
\[
        \prod_{u\in X_0}\eps_u^{-1}.
\]
Indeed, from \eqref{eq:lambda-u}, for each $u\in X_0$, the denominator factor $p(\alpha_u)$ in
$\lambda_u$ contains
\[
        \alpha_u-\beta_{\eta(u)}=-\eps_u.
\]
After multiplying the \(X_0\)-summand by
\(\prod_{u\in X_0}\varepsilon_u\) and then setting all
\(\varepsilon_u=0\), we obtain
\[
\left(
\prod_{u\in X_0}
\frac{
\prod_{j\in W}(\alpha_u-\alpha_j)
}{
\left(\prod_{v\in[m]\setminus\{\eta(u)\}}
(\alpha_u-\beta_v)\right)
\left(\prod_{v\in U\setminus\{u\}}
(\alpha_u-\alpha_v)\right)
}
\right)
\prod_{\substack{\{u,v\}\subseteq X_0\\u<v}}
(\alpha_v-\alpha_u)^2.
\]
This is a nonzero rational function in the remaining
algebraically independent variables.

No summand indexed by $X\ne X_0$ contains the same full pole part
$\prod_{u\in X_0}\eps_u^{-1}$.  Since $|X|=|X_0|=s$ and $X\ne X_0$, the set
$X$ omits at least one element of $X_0$, and therefore the corresponding
summand has no pole in the omitted variable.  Hence the coefficient of
$\prod_{u\in X_0}\eps_u^{-1}$ in the specialized Laurent series is nonzero.
Thus $\Delta_{S,T}$ is not the zero rational function.
\end{proof}

\begin{proposition}\label{prop:ordinary-upper}
Let \(k\ge2\), \(1\le s\le k-1\), and \(n\ge2k-s\). Then, for every infinite field \(\F\),
\[
        \mr^\F\!\left(J_{\ge s}(n,k)\right)
        \le
        \binom{\,n-2(k-s)\,}{s}.
\]
\end{proposition}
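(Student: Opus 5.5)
The plan is to show that the matrix \(A\) of \eqref{eq:A-definition} lies in \(\cS^\F(J_{\ge s}(n,k))\) for a suitable specialization of the parameters. Then \eqref{eq:rank-upper-general} gives \(\rank A\le\binom{m}{s}=\binom{n-2(k-s)}{s}\), which is the claimed bound. The matrix \(A\) is symmetric and its diagonal entries are unrestricted. Lemma~\ref{lem:vanishing} already gives \(A_{S,T}=0\) whenever \(|S\cap T|<s\), and this holds for every admissible choice of pairwise distinct \(\alpha_1,\ldots,\alpha_n,\beta_1,\ldots,\beta_m\in\F\). So the only thing left is to choose these distinct values so that \(A_{S,T}\ne0\) for every edge \(\{S,T\}\) of \(J_{\ge s}(n,k)\).

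For that step I will clear denominators. Each entry \(M^{S,T}_{a,b}\) in \eqref{eq:M-entry} is a rational function of the variables. Its denominators are products of factors \(\beta_i-\beta_{i'}\) (from \(p'(\beta_i)\)) and \(\beta_i-\alpha_j\) (from \(q(\beta_i)\)). Set
\[
D=\prod_{i<i'}(\beta_i-\beta_{i'})\prod_{j<j'}(\alpha_j-\alpha_{j'})\prod_{i,j}(\beta_i-\alpha_j).
\]
Then for each edge \(\{S,T\}\) the product \(D^{s}\Delta_{S,T}\) is a polynomial \(P_{S,T}\in\F[\alpha,\beta]\). By Lemma~\ref{lem:generic-nonzero}, \(\Delta_{S,T}\) is not the zero rational function, so \(P_{S,T}\) is a nonzero polynomial. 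Now take
\[
P=D\prod_{\{S,T\}\in E(J_{\ge s}(n,k))}P_{S,T}.
\]
This is a finite product of nonzero polynomials in the domain \(\F[\alpha,\beta]\), so \(P\) is nonzero.

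Because \(\F\) is infinite, a nonzero polynomial in finitely many variables does not vanish identically on \(\F^{n+m}\). This is the standard fact, proved by induction on the number of variables using that a nonzero one-variable polynomial has finitely many roots. So there is a point \((\alpha,\beta)\in\F^{n+m}\) with \(P(\alpha,\beta)\ne0\). At that point \(D\ne0\), so all the \(\alpha_j\) and \(\beta_i\) are pairwise distinct and the construction of \S\ref{subsec:gram} is well defined; in particular every \(w_i\) is nonzero. Evaluating the rational function \(\Delta_{S,T}\) at this point agrees with substituting directly into \eqref{eq:M-entry}, so \eqref{eq:det-formula} gives \(A_{S,T}=P_{S,T}/D^{s}\ne0\) for every edge. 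Together with Lemma~\ref{lem:vanishing}, this shows the off-diagonal pattern of \(A\) is exactly that of \(J_{\ge s}(n,k)\).

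The main obstacle, the generic nonvanishing, is already settled by Lemma~\ref{lem:generic-nonzero}. What remains is bookkeeping. One point is checking that specialization commutes with forming the determinant. This holds because evaluation at a point where \(D\ne0\) is a ring homomorphism from the localization \(\F[\alpha,\beta][D^{-1}]\), and all the \(M^{S,T}_{a,b}\) lie in that ring. The other point is making sure the same point works simultaneously for all edges, which taking the product polynomial \(P\) handles.
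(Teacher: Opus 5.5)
Your proposal is correct and follows essentially the same route as the paper. Like the paper, you take the matrix $A$ of \eqref{eq:A-definition}, use Lemma~\ref{lem:vanishing} for non-edges, turn each $\Delta_{S,T}$ into a nonzero polynomial via Lemma~\ref{lem:generic-nonzero}, and choose a point of $\F^{n+m}$ where the product of these polynomials and the pairwise-distinctness factors is nonzero. The differences are only cosmetic: you clear denominators uniformly with $D^{s}$ from the entry formula \eqref{eq:M-entry} instead of edge-by-edge in \eqref{eq:cauchy-binet-edge}, and you state explicitly that evaluation commutes with the determinant, which the paper leaves implicit.
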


\begin{proof}

For each pair of distinct adjacent vertices $S,T$ of
$J_{\ge s}(n,k)$, choose a nonzero polynomial $D_{S,T}$ that clears all
denominators in the expression for $\Delta_{S,T}$ in
\eqref{eq:cauchy-binet-edge}, and set
$N_{S,T}=D_{S,T}\Delta_{S,T}$. By Lemma \ref{lem:generic-nonzero}, $N_{S,T}$ is a nonzero
polynomial.  Consider the finite product
\[
        \Phi(\alpha,\beta)
        =
        \prod_{1\le a<b\le n}(\alpha_a-\alpha_b)
        \prod_{1\le i<j\le m}(\beta_i-\beta_j)
        \prod_{\substack{1\le i\le m\\1\le a\le n}}(\beta_i-\alpha_a)
        \prod_{\substack{S,T\in\binom{[n]}{k}\\ S\ne T,\ |S\cap T|\ge s}}
        N_{S,T}(\alpha,\beta).
\]
This is a nonzero polynomial over $\F$.  Since $\F$ is infinite, there is a
point $(\alpha,\beta)\in\F^{n+m}$ at which $\Phi$ is nonzero.

For this choice, the elements $\alpha_1,\ldots,\alpha_n,\beta_1,\ldots,\beta_m$ are pairwise distinct.  The matrix $A$ in~\eqref{eq:A-definition} has rank at most $\binom{m}{s}$ by~\eqref{eq:rank-upper-general}.  By Lemma~\ref{lem:vanishing}, $A_{S,T}=0$ whenever $|S\cap T|<s$.  By the choice of $\Phi$, $A_{S,T}\ne0$ whenever $S\ne T$ and $|S\cap T|\ge s$. Hence $ A\in\cS^\F(J_{\ge s}(n,k))$ and
\[
        \rank A\le \binom{m}{s}
        =
        \binom{\,n-2(k-s)\,}{s}.
\]
This proves the desired upper bound.
\end{proof}

\section{The positive semidefinite construction over the reals}\label{sec:psd}

The construction above becomes positive semidefinite over $\R$ whenever all
weights $w_i$ are positive.  We ensure this by an interlacing choice of the
parameters.

Continue to write $t=k-s$ and $m=n-2t$.  Let $\Omega$ be the open subset of
$\R^{n+m}$ consisting of tuples
$(\alpha_1,\ldots,\alpha_n,\beta_1,\ldots,\beta_m)$ satisfying
\begin{equation}\label{eq:interlacing}
        \alpha_1<\beta_1<\alpha_2<\beta_2<\cdots
        <\alpha_m<\beta_m<\alpha_{m+1}<\cdots<\alpha_n.
\end{equation}

This open set is nonempty because $n-m=2t\ge2$.  The strict inequalities in
\eqref{eq:interlacing} will be used only through the following sign
computation.

\begin{lemma}\label{lem:positive-weights}
For every point of $\Omega$, we have
\[
        \frac{1}{p'(\beta_i)q(\beta_i)}>0
        \qquad
        (i=1,\ldots,m).
\]
\end{lemma}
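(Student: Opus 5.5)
The plan is a direct sign count: determine the sign of $p'(\beta_i)$ and of $q(\beta_i)$ separately from the ordering \eqref{eq:interlacing}, and check that the two signs agree.

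First, $p'(\beta_i)$. Since $p(x)=\prod_{j=1}^m(x-\beta_j)$ has simple roots,
\[
p'(\beta_i)=\prod_{j\ne i}(\beta_i-\beta_j).
\]
The only negative factors are those with $j>i$, and there are $m-i$ of them. Hence $\sgn p'(\beta_i)=(-1)^{m-i}$.

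Second, $q(\beta_i)=\prod_{j=1}^n(\beta_i-\alpha_j)$. By \eqref{eq:interlacing}, $\beta_i$ lies strictly between $\alpha_i$ and $\alpha_{i+1}$, where $i+1\le m+1\le n$. So the factor $\beta_i-\alpha_j$ is positive exactly for $j\le i$ and negative for $j>i$. There are $n-i$ negative factors, which gives $\sgn q(\beta_i)=(-1)^{n-i}$.

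Multiplying, we get
\[
\sgn\bigl(p'(\beta_i)q(\beta_i)\bigr)=(-1)^{m+n-2i}=(-1)^{m+n}=(-1)^{2n-2t}=1,
\]
using $m=n-2t$. All the factors are nonzero because every $\alpha_j$ and $\beta_i$ is distinct. Therefore the reciprocal is positive.

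The only point needing care is the index bookkeeping: $\beta_m<\alpha_{m+1}$ must exist so that $\alpha_{m+1}$ is defined, and $n-m=2t\ge 2$ guarantees this. Beyond that there is no real obstacle; the statement reduces to the parity $m\equiv n \pmod 2$.
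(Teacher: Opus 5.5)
Your proposal is correct and matches the paper's argument: you count $n-i$ negative factors in $q(\beta_i)$ and $m-i$ negative factors in $p'(\beta_i)$, then use that $n-m=2t$ is even. Your side remark about $\alpha_{m+1}$ is harmless but not actually needed for the sign count. It matters only for $\Omega$ being nonempty, which is where the paper uses $n-m\ge 2$.
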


\begin{proof}
Since $\beta_i$ lies between $\alpha_i$ and $\alpha_{i+1}$, exactly $n-i$ of
the factors $\beta_i-\alpha_a$ are negative.  Hence
\[
        \sgn q(\beta_i)=(-1)^{n-i}.
\]
Similarly, exactly $m-i$ of the factors $\beta_i-\beta_j$, $j\ne i$, are
negative, so
\[
        \sgn p'(\beta_i)=(-1)^{m-i}.
\]
Because $n-m=2t$ is even, these two signs are equal.  Thus
$p'(\beta_i)q(\beta_i)>0$.
\end{proof}

\begin{proposition}\label{prop:psd-upper}
Let \(k\ge2\), \(1\le s\le k-1\), and \(n\ge2k-s\). Then
\[
        \mrp^\R\!\left(J_{\ge s}(n,k)\right)
        \le
        \binom{\,n-2(k-s)\,}{s}.
\]
\end{proposition}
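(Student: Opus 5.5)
The plan is to reuse the construction of \cref{subsec:gram} over $\R$, now with the parameters restricted to the open set $\Omega$.

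\textbf{Step 1: positivity.} For any point of $\Omega$, \cref{lem:positive-weights} shows that all weights $w_i$ are positive. Hence $B$ is a positive definite inner product on $\R^m$. As recalled in \cref{subsec:gram}, $B^{\wedge s}$ is then positive definite on $\bigwedge^s\R^m$. The matrix $A$ in \eqref{eq:A-definition} is the Gram matrix of the vectors $z_S$ with respect to $B^{\wedge s}$. Writing $B^{\wedge s}=C^\top C$ in coordinates gives $A=(CZ)^\top(CZ)\succeq0$, where $Z$ has columns $z_S$. By \eqref{eq:rank-upper-general}, $\rank A\le\binom{m}{s}$.

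\textbf{Step 2: the zero pattern.} At every point of $\Omega$ the $\alpha$'s and $\beta$'s are pairwise distinct, so \cref{lem:vanishing} applies. It gives $A_{S,T}=0$ whenever $|S\cap T|<s$, with no further choices needed.

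\textbf{Step 3: nonvanishing on edges.} This is the main point. We must find a point of $\Omega$ at which the polynomial $\Phi$ from the proof of \cref{prop:ordinary-upper}, taken over $\F=\R$, is nonzero. By \cref{lem:generic-nonzero}, $\Phi$ is a nonzero real polynomial in $n+m$ variables. A nonzero polynomial cannot vanish identically on a nonempty open subset of $\R^{n+m}$, since all its Taylor coefficients at an interior point would then be zero. The set $\Omega$ is nonempty and open because $n-m=2t\ge2$. Therefore the zero set of $\Phi$ is a proper closed nowhere-dense set, and some point of $\Omega$ has $\Phi\ne0$. At such a point every factor $N_{S,T}$ is nonzero, and so is every denominator-clearing factor $D_{S,T}$ (we may take $D_{S,T}$ to be a product of differences of the variables, which is nonzero on $\Omega$). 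Hence $A_{S,T}=\Delta_{S,T}\neq0$ for all distinct $S,T$ with $|S\cap T|\ge s$.

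\textbf{Conclusion.} Combining Steps 1 to 3, $A\in\cSp^\R(J_{\ge s}(n,k))$ and $\rank A\le\binom{n-2(k-s)}{s}$, which proves the proposition.

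The only potential subtlety is that \cref{lem:generic-nonzero} proves generic nonvanishing over all of parameter space, not on $\Omega$. The open-set argument in Step 3 handles this without redoing the Laurent-series computation inside the interlacing region.
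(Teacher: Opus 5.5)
Your proposal is correct and follows essentially the same route as the paper. The paper picks a point of $\Omega$ where all the numerators $N_{S,T}$ are nonzero (a nonzero real polynomial cannot vanish on a nonempty open set), uses \cref{lem:positive-weights} to make $B$ and hence $B^{\wedge s}$ positive definite so that $A\succeq0$, and concludes with \cref{lem:vanishing} and \eqref{eq:rank-upper-general}; your inclusion of the difference factors of $\Phi$ and your remark on the $D_{S,T}$ are redundant on $\Omega$ but harmless.
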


\begin{proof}
For each pair of distinct adjacent vertices $S,T$ of $J_{\ge s}(n,k)$, let $N_{S,T}$ be the nonzero
polynomial numerator used in the proof of Proposition \ref{prop:ordinary-upper}.  The
finite product of all $N_{S,T}$ is a nonzero real polynomial. Since a nonzero real polynomial cannot vanish on a nonempty open subset of
$\R^{n+m}$, its nonzero locus meets $\Omega$.  We may therefore choose a
point of $\Omega$ at which every $N_{S,T}$ is nonzero.

For this choice of parameters, Lemma \ref{lem:positive-weights} shows that all weights $w_i$ are positive.  Hence the bilinear form
\[
        B(u,v)=\sum_{i=1}^m w_i u_i v_i
\]
on $\R^m$ is positive definite, and its induced exterior-power form
$B^{\wedge s}$ on $\bigwedge^s\R^m$ is positive definite.  Therefore the
matrix
\[
        A=(B^{\wedge s}(z_S,z_T))_{S,T\in\binom{[n]}{k}}
\]
is positive semidefinite.

The same vanishing and generic nonvanishing arguments used in
\cref{sec:algebraic} show that, for distinct $S,T$,
\[
        A_{S,T}=0
        \quad\Longleftrightarrow\quad
        |S\cap T|<s.
\]
Thus $A\in\cSp^\R(J_{\ge s}(n,k))$.  By \eqref{eq:rank-upper-general},
\[
        \rank A\le \binom{m}{s}
        =
        \binom{\,n-2(k-s)\,}{s}.
\]
This gives the positive semidefinite upper bound.
\end{proof}

We are now ready to prove the main theorem.

\begin{proof}[Proof of \cref{thm:main}]
The ordinary lower bound is Proposition \ref{prop:lower}, and the ordinary upper bound over
every infinite field is Proposition \ref{prop:ordinary-upper}.  Hence
\[
        \mr^\F\!\left(J_{\ge s}(n,k)\right)
        =
        \binom{\,n-2(k-s)\,}{s}
\]
for every infinite field $\F$.  Over $\R$, Proposition \ref{prop:lower} also gives the
positive semidefinite lower bound, and Proposition \ref{prop:psd-upper} gives the matching
positive semidefinite upper bound.  Therefore
\[
        \mrp^\R\!\left(J_{\ge s}(n,k)\right)
        =
        \binom{\,n-2(k-s)\,}{s}.
\qedhere\]
\end{proof}

\section{Consequences and related remarks}\label{sec:consequences}
Taking \(s=1\) in \cref{thm:main} gives an affirmative answer to
\cref{prob:aim}.

\begin{corollary}\label{cor:aim}
Let \(n,k\) be integers with \(2\le k\le n/2\). Then, for every infinite
field \(\F\), we have
\[
        \mr^\F\!\left(J_{\ge1}(n,k)\right)=\mrp^\R\!\left(J_{\ge1}(n,k)\right)=n-2k+2.
\]
\end{corollary}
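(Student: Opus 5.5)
This corollary is the case \(s=1\) of \cref{thm:main}. The plan is to check that the hypotheses match and that the formula reduces to \(n-2k+2\).

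First, verify the parameter range. With \(s=1\), the hypotheses of \cref{thm:main} become \(k\ge2\), \(1\le 1\le k-1\) (equivalent to \(k\ge2\)), and \(n\ge 2k-1\). The corollary assumes \(2\le k\le n/2\), so \(k\ge2\) and \(n\ge 2k\ge 2k-1\). All hypotheses therefore hold.

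Next, evaluate the formula. Substituting \(s=1\) gives
\[
\binom{n-2(k-1)}{1}=n-2k+2 .
\]
This is a positive integer, because \(n-2k+2\ge 2\).

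Finally, \cref{thm:main} gives \(\mr^\F(J_{\ge1}(n,k))=n-2k+2\) for every infinite field \(\F\), and \(\mrp^\R(J_{\ge1}(n,k))=n-2k+2\). The corollary states exactly these two equalities, so nothing further is needed. In particular, taking \(\F=\R\) answers \cref{prob:aim} affirmatively.
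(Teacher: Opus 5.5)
Your proposal is correct and matches the paper's proof, which simply applies \cref{thm:main} with \(s=1\). Your hypothesis check (\(n\ge 2k\ge 2k-1\)) and the evaluation \(\binom{n-2(k-1)}{1}=n-2k+2\) are exactly what is needed.
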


\begin{proof}
Apply \cref{thm:main} with \(s=1\).
\end{proof}

\begin{corollary}\label{cor:fod-threshold}
Let \(k\ge2\), \(1\le s\le k-1\), and \(n\ge2k-s\). Then
\[
        \fod^\R\!\left(J_{\ge s}(n,k)\right)
        =
        \binom{\,n-2(k-s)\,}{s}.
\]
In particular, if \(n\ge8\) is divisible by \(4\), then
\[
        \fod^\R\!\left(\overline{\KG(n,n/4)}\right)
        =
        \frac n2+2
        <
        n.
\]
\end{corollary}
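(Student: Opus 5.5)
The plan is to combine the main theorem with the Gram-matrix correspondence of \cref{lem:fod-psd}. \cref{lem:fod-psd} requires that the graph has no isolated vertices, so the first step is to check this for \(J_{\ge s}(n,k)\) under the stated hypotheses.

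Fix a vertex \(S\in\binom{[n]}{k}\). Since \(n\ge2k-s\ge k+1\) (because \(s\le k-1\)), there is an element \(y\in[n]\setminus S\). Choose any \(x\in S\) and set \(T=(S\setminus\{x\})\cup\{y\}\). Then \(T\ne S\) and \(|S\cap T|=k-1\ge s\), so \(S\) and \(T\) are adjacent. Hence the graph has no isolated vertices.

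With that in hand, \cref{lem:fod-psd} gives \(\fod^\R(J_{\ge s}(n,k))=\mrp^\R(J_{\ge s}(n,k))\), and \cref{thm:main} evaluates the latter as \(\binom{n-2(k-s)}{s}\). This proves the first claim.

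For the special case, take \(s=1\) and \(k=n/4\). The hypothesis \(n\ge8\) gives \(k\ge2\), and \(n\ge 2k-1\) clearly holds. Since \(J_{\ge1}(n,k)=\overline{\KG(n,k)}\), the formula gives
\[
\binom{n-2(k-1)}{1}=n-2k+2=n-\tfrac n2+2=\tfrac n2+2 .
\]
Finally, \(\tfrac n2+2<n\) is equivalent to \(n>4\), which holds because \(n\ge8\). There is no real obstacle in this argument; the only point that needs care is the isolated-vertex check needed to apply \cref{lem:fod-psd}.
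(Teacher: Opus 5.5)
Your proposal is correct and follows the paper's proof essentially verbatim: the same one-element swap $T=(S\setminus\{x\})\cup\{y\}$ rules out isolated vertices, then \cref{lem:fod-psd} is combined with \cref{thm:main}, and the special case is the specialization $s=1$, $k=n/4$. Your explicit check that $k=n/4\ge2$ and $n\ge 2k-1$, so that the hypotheses of \cref{thm:main} hold, is a small detail the paper leaves implicit.
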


\begin{proof}
The graph \(J_{\ge s}(n,k)\) has no isolated vertices. Indeed, since
\(n\ge2k-s\) and \(s\le k-1\), we have \(n\ge k+1\). For any
\(S\in\binom{[n]}{k}\), choose \(a\in S\) and \(b\in[n]\setminus S\), and set $T=(S\setminus\{a\})\cup\{b\}$. Then \(T\ne S\) and
\[
        |S\cap T|=k-1\ge s,
\]
so \(T\) is adjacent to \(S\).

The first assertion now follows from \cref{lem:fod-psd,thm:main}. For the
final statement, take \(s=1\) and \(k=n/4\). Since
\(J_{\ge1}(n,n/4)=\overline{\KG(n,n/4)}\), we obtain
\[
        \fod^\R\!\left(\overline{\KG(n,n/4)}\right)
        =
        n-2\left(\frac n4-1\right)
        =
        \frac n2+2.
\]
Since \(n\ge8\), this is strictly smaller than \(n\).
\end{proof}

\begin{remark}
Haviv, using the same nonedge-orthogonality convention, proved that the
ordinary, not necessarily faithful, real orthogonality dimension of
\(\overline{\KG(n,n/4)}\) is \(n/2+2\) whenever \(4\mid n\); see
\cite[Corollary~3.3]{Haviv2019}. Together with
\cref{cor:fod-threshold}, this shows that, for every \(n\ge8\) divisible
by \(4\), requiring the representation to be faithful does not increase
the minimum dimension. In particular, this answers the question posed
in \cite{GolovnevMO}.
\end{remark}

\begin{remark}[A nonsymmetric counterpart]
For \(s=1\), Berkman and Haviv proved that, for every \(n\ge2k\), there
exists an integer \(c=c(n,k)\) such that, over every field \(\F\) with
\(|\F|\ge c\), the Kneser graph \(\KG(n,k)\) admits a faithful
\((n-2k+2)\)-dimensional independent representation
\cite[Theorem~6.4 in the full version]{BerkmanHaviv2025}. By their matrix formulation
\cite[Remark~3.10 in the full version]{BerkmanHaviv2025}, this yields a matrix
\[
        M\in\F^{\binom{n}{k}\times\binom{n}{k}}
\]
of rank at most \(n-2k+2\), not necessarily symmetric, whose diagonal
entries are nonzero and such that, for all distinct
\(S,T\in\binom{[n]}{k}\),
\[
        M_{S,T}=0
        \quad\Longleftrightarrow\quad
        S\cap T=\varnothing.
\]
Equivalently, the off-diagonal zero--nonzero pattern of \(M\) is precisely
that prescribed by \(J_{\ge1}(n,k)\). By contrast,
\cref{cor:aim} determines the exact minimum rank under the additional
symmetry requirement over every infinite field, and also determines the
positive semidefinite minimum rank over \(\R\).
\end{remark}

\begin{remark}[Finite fields]\label{rem:finite-fields}
The hypothesis that \(\F\) is infinite cannot be removed.  Indeed, over
\(\mathbb F_2\) one has
\[
        \mr^{\mathbb F_2}(J_{\ge1}(5,2))=4,
\]
whereas \(5-2\cdot 2+2=3<4\).

To see this, identify the vertices of \(J_{\ge1}(5,2)\) with the edges of
\(K_5\).  Over \(\mathbb F_2\), every matrix in
\(\cS^{\mathbb F_2}(J_{\ge1}(5,2))\) has the same prescribed off-diagonal
entries: the \((e,f)\)-entry is \(1\) exactly when the two edges \(e\) and
\(f\) of \(K_5\) meet.  The diagonal entries are arbitrary.

We first show that every such matrix has rank at least \(4\).  The ten
columns must be pairwise distinct.  Indeed, if \(e\ne f\) are two edges of
\(K_5\), then there is an edge \(g\) of \(K_5\), distinct from both \(e\)
and \(f\), which meets exactly one of \(e\) and \(f\).  Hence the \(g\)-th
entries of the \(e\)-th and \(f\)-th columns are different.  A vector space
of dimension at most \(3\) over \(\mathbb F_2\) has at most \(2^3=8\)
vectors, so a matrix with ten pairwise distinct columns must have rank at
least \(4\).

Conversely, let \(N\) be the \(5\times 10\) vertex-edge incidence matrix of
\(K_5\) over \(\mathbb F_2\).  Then \(N^{\top} N\) has zero diagonal and has
off-diagonal entry \(1\) precisely for intersecting pairs of edges.  Thus
\(N^{\top} N\in \cS^{\mathbb F_2}(J_{\ge1}(5,2))\).  Since each column of \(N\) has two
ones, the sum of the rows of \(N\) is zero, and hence \(\rank N\le4\).
Therefore
\[
        \rank(N^{\top} N)\le \rank N\le4.
\]
Combining the two inequalities gives $\mr^{\mathbb F_2}(J_{\ge1}(5,2))=4$.
\end{remark}

\section*{Acknowledgments}

We thank Ishay Haviv for helpful comments on an earlier version of this
manuscript. His comments prompted us to consider the positive semidefinite
variant of the minimum-rank result.

\section*{Statements and Declarations}

\subsection*{Funding}

The authors did not receive support from any organization for the submitted
work.

\subsection*{Competing interests}

The authors have no relevant financial or non-financial interests to disclose.

\subsection*{Data availability}

No datasets were generated or analyzed during the current study.

\subsection*{Use of generative AI}

We used ChatGPT to generate exploratory code for assessing the plausibility of
conjectured statements and to help polish the language of proof drafts written
by the authors. All mathematical claims, calculations, and proofs were
independently verified by the authors. The core mathematical ideas and final
arguments were developed by the authors; any suggestions from the tool were
substantially modified or discarded as appropriate. The authors take full
responsibility for the correctness and originality of the paper.

\end{document}